\numberwithin{equation}{section}
\newtheorem{Def}{Definition}
\newtheorem{Thm}{Theorem}
\newtheorem{Lemma}{Lemma}
\newtheorem{Prop}{Proposition}
\newcommand{\C}{\mathbb{C}}
\newcommand{\p}{\mathbb{P}^1}
\newcommand{\R}{\mathbb{R}}
\newcommand{\J}{\mathcal{J}}
\newcommand{\E}{\mathcal{E}}
\newcommand{\doubletilde}[1]{{%
  \mathpalette\double@Tilde{#1}%
}}
\newcommand{\double@Tilde}[2]{%
  \sbox\z@{$\m@th#1\Tilde{#2}$}%
  \ht\z@=0.95\ht\z@
  \Tilde{\box\z@}%
}
\begin{document}

\title[\tiny{Laplacians on Julia sets of rational maps}]{Laplacians on Julia sets of rational maps}
\author{Malte S. Ha\ss ler}
\address{Jacobs University Bremen, Bremen, 28759, Germany}
\curraddr{} \email{m.hassler@jacobs-university.de}
\thanks{Research supported by an undergraduate research internship program held at Cornell university in summer 2019}

\author{Hua Qiu}
\address{Department of Mathematics, Nanjing University, Nanjing, 210093, P. R. China.}
\curraddr{} \email{huaqiu@nju.edu.cn}

\author{ Robert S. Strichartz }
\address{Department of Mathematics, Cornell University, Ithaca, 14853, U.S.A.}
\curraddr{} \email{str@math.cornell.edu}

\subjclass[2000]{Primary 28A80.}

\keywords{Sierpinski gasket, Laplacian, Julia set, Misiurewicz map, Dirichlet form}

\date{}

\dedicatory{}
\begin{abstract}
The study of Julia sets gives a new and natural way to look at fractals. When mathematicians investigated the special class of Misiurewicz's rational maps, they found out that there is a Julia set which is homeomorphic to a well known fractal, the Sierpinski gasket. In this paper, we apply the method of Kigami to give rise to a new construction of Laplacians on the Sierpinski gasket like Julia sets with a dynamically invariant property.
\end{abstract}
\maketitle

\section{Introduction}

Recall that the familiar Sierpinski gasket (SG) is generated in the following way. Starting with a triangle one divides it into four copies, removes the central one, and repeats the iterated process. 
One aspect of the study of this fractal stems from the analytical construction of a Laplacian developed by Jun Kigami in 1989 \cite{kig89}. Since then, the analysis on SG has been extensively investigated from various viewpoints.
The theory has been extended to some other fractals \cite{kig93}, too. And for some of them one could say they are more \textit{invented} like SG  than \textit{discovered}. A standard reference to this topic is the book of Robert S. Strichartz \cite{str}.

When mathematicians studied the behaviour of polynomial maps under iteration, at first, they did not think about fractals. Given a starting point $z_0 \in \C$ and a polynomial $P(z)$ they wanted to know, whether the sequence of iterations $P(z_0),P(P(z_0))...$ converges. They named the set of points that show this behaviour the \textit{filled Julia set} and its boundary the \textit{Julia set} after the French mathematician Gaston Julia. For example, the Julia set of $P(z)=z^2$ is just the unit circle. But the slightly different polynomial $P(z)=z^2-1$ has a more complicated structure, see Figure \ref{basilica}.

\begin{figure}[H]
\begin{center}
\includegraphics[scale=0.15]{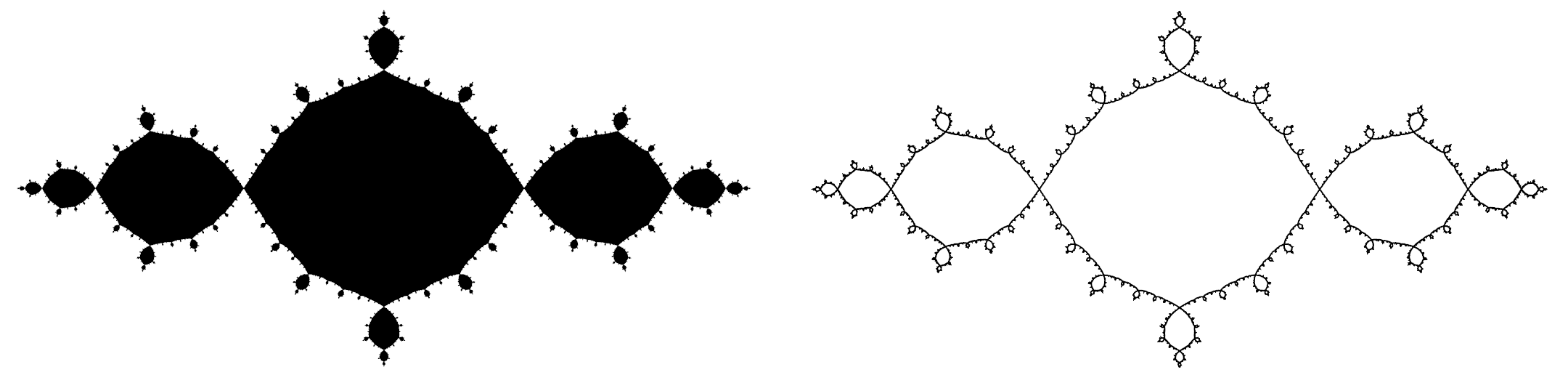}
\end{center}
\caption{The basilica filled Julia set and its boundary}\label{basilica}
\end{figure}

One can see a self-similar structure of this set. The fractal structure is a result from complex dynamics and the often chaotic behaviour of maps under iteration. 
The construction of a Laplacian for the basilica Julia set has been performed in \cite{bas} inspired by the theory of external rays. \cite{tar} and \cite{two} have built up on it and gave a construction for other certain quadratic polynomials. The theory can be generalized to higher degree in certain cases \cite{mywebsite}. An advantage is that the Laplacian can be made invariant to the polynomial map. Hence, one has an interesting connection of harmonic analysis on fractals with complex dynamics. For example, this is useful to understand the spectrum of the Laplacian. 

\begin{figure}[H]
\begin{center}
\includegraphics[scale=0.15]{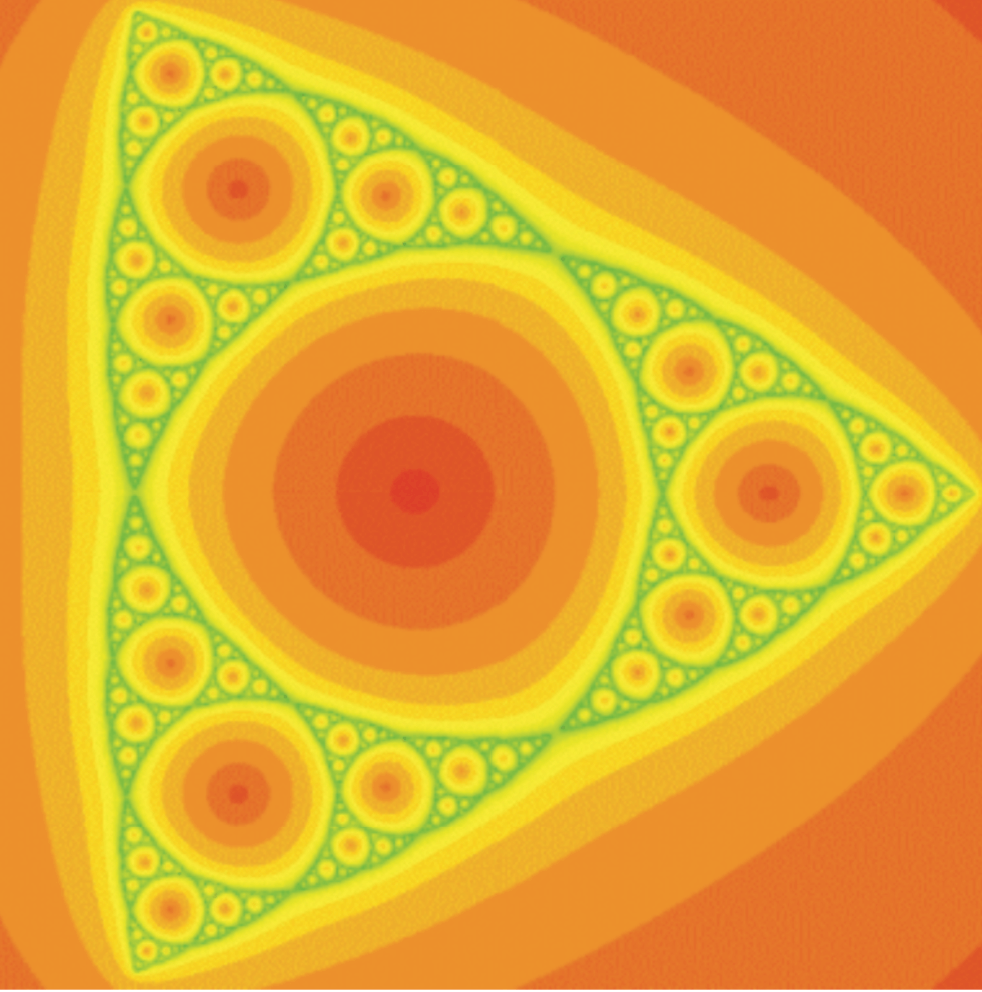}
\end{center}
\caption{The Julia set of $z^2-\frac{\lambda}{z}$ with $\lambda\approx 0.59267$}\label{degree3}
\end{figure}

Going further, one wants to study Julia sets of rational maps. Surprisingly, there exists a rational map of degree $3$ whose Julia set is homeomorphic to the Sierpinski gasket \cite{dev}! See Figure \ref{degree3}. This shows that SG is not necessarily a man-made fractal, it naturally occurs in the world of complex dynamics equipped with a complex valued map that will help to understand the fractal further. The goal of this paper is to define energy forms and Laplacians on this type of Julia sets that respect the dynamical properties.

In Section $2$ we will define Julia sets for rational maps and present the necessary theoretical background. In Section $3$ we present the work by Devaney et. al. \cite{dev} who made a topological description of certain Julia sets that look similar to the Sierpinski gasket, show how the rational map ``acts'' on SG and how to define the graph approximation. In Section $4$ we construct the standard energy for SG that is invariant under the rational map.
The next step of constructing a Laplacian is to determine a measure. In the following section we refer to the work of Denker and Urbanski that have studied the ergodic and invariant measures on Julia sets of Misiurewicz rational maps \cite{DU}. Applying this to our case will lead to the definition of a dynamically invariant Laplacian on SG. In Section $6$ we construct an iterated function system to describe the collection of all symmetric and invariant energy forms in the upcoming part.
The rational maps give rise to a large class of fractals similar to SG and we will generalize the construction of Laplacians in the final section. 

\section{Julia sets of rational maps}

Complex dynamics is not restricted to polynomial maps. One can also investigate the dynamics of rational functions $R(z)=\frac{P(z)}{Q(z)}$ where $P$ and $Q$ are complex valued polynomials. One might react critical to the case when $z$ is a root of $Q$, since $z$ is mapped to $\infty$ by $R$, but for the iteration of functions, $\infty$ is not a special point. Hence, one deals with maps $R: \p\mapsto\p$, where $\p :=\C\cup\{\infty\}$ is the Riemann sphere. Indeed, $\p$ can be identified as the usual 2-sphere which also provides a metric.

Now one wants to define a Julia set for the rational map. The definition for polynomials by bounded orbits does not work anymore. Instead, one defines Julia sets by \textit{normal families}. The definition and further mentioned properties are from \cite{bla}, which gives a rigorous introduction into the dynamics of rational maps. A standard reference is also given by Beardon \cite{bear}.

\begin{Def}[\cite{bla} p.89]
Let $U$ be an open subset of $\p$ and $\mathfrak{F}=\{f_i \, |i\in I\}$ a family of meromorphic functions on $\p$ defined on $U$ ($I$ is any index set). The family $\mathfrak{F}$ is a \textit{normal family} if every sequence $f_n$ contains a subsequence $f_{n_j}$ which converges uniformly on compact subsets of $U$.
\end{Def}

\begin{Def}
The \textit{Fatou set} $\mathcal{F}$ of a rational map $R: \p\mapsto\p$ is the set of
points that have a neighborhood on which the sequence of iterates $R^n$ forms a normal family. The \textit{Julia set} $\mathcal{J}$ is the set of points that have no such neighbourhood.
\end{Def}

We will not discuss the origin of this definition. It is important that it coincides with the definition for polynomials and similar properties of Julia sets still hold: the Julia set is compact and completely invariant, meaning that
\begin{equation}
    R(\J)=\J=R^{-1}(\J).
\end{equation}

The formal definition of a Julia set is not intrinsically useful to decide whether a point belongs to the Julia set or not. For periodic points this can be decided rather easily with the following definition and proposition. Together with the invariance property one can conclude for more points to be in the Julia set. 

\begin{Def}[\cite{bla} p.93]
The periodic orbit $O^+(z_0)$ of a periodic point consists of all points $R^k(z_0)$ for $1\leq k<n$ and $R^n(z_0)=z_0$. Let $\mu=(R^n)'(z_0)$. A periodic orbit is:
\begin{itemize}
    \item \textit{attracting} if $0<|\mu|<1$,
    \item \textit{superattracting} if $\mu=0$,
    \item \textit{repelling} if $|\mu|>1$,
    \item \textit{indifferent} if $|\mu|=1$.
\end{itemize}
\end{Def}

\begin{Prop}
If $O^+(z_o)$ is a (super)attracting periodic orbit, then it is contained in $\mathcal{F}$. If it is a repelling orbit, then it is contained in $\mathcal{J}$.
\end{Prop}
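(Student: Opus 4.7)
The plan is to prove the two statements separately by passing to $f := R^n$, which has $z_0$ as a fixed point with multiplier $f'(z_0)=\mu$. Normality of $\{R^k\}$ on a neighborhood of $z_0$ can be read off from normality of $\{f^j\}$, since any subsequence $R^{k_j}$ may be thinned so that $k_j\equiv r\pmod n$ for a fixed $r\in\{0,\dots,n-1\}$, giving $R^{k_j}=R^r\circ f^{(k_j-r)/n}$. The rest of the orbit then follows from complete invariance of $\mathcal F$ and $\mathcal J$.

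For the (super)attracting case, I would linearize $f$ at $z_0$. Since $|\mu|<1$, choose $c$ with $|\mu|<c<1$; then by Taylor expansion there is an open disk $U\ni z_0$ on which $|f(z)-z_0|\le c|z-z_0|$. Iteration yields $f^j\to z_0$ uniformly on $U$, hence $\{f^j\}$ is normal on $U$. By the reduction above, any sequence $R^{k_j}$ has a subsequence converging uniformly on $U$ to a constant $R^r(z_0)$, so $\{R^k\}$ is normal on $U$ and $z_0\in\mathcal F$. (If $z_0=\infty$, pass to the chart $w=1/z$ and run the same argument.)

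For the repelling case, suppose for contradiction that $z_0\in\mathcal F$, so $\{R^k\}$—and therefore the subfamily $\{f^j\}$—is normal on some neighborhood $U$ of $z_0$. Extract a subsequence $f^{j_k}$ converging uniformly on compact subsets of $U$ (in the spherical metric) to a limit $g\colon U\to\p$; the limit is either meromorphic or identically $\infty$. Since $f^{j_k}(z_0)=z_0$ for all $k$, continuity forces $g(z_0)=z_0$, ruling out the identically-$\infty$ case (after a chart change if $z_0=\infty$, so we may assume $z_0\neq\infty$). Thus $g$ is holomorphic at $z_0$, and uniform convergence gives
\begin{equation*}
g'(z_0)=\lim_{k\to\infty}(f^{j_k})'(z_0)=\lim_{k\to\infty}\mu^{j_k}.
\end{equation*}
But $|\mu|>1$ implies $|\mu^{j_k}|\to\infty$, while $g'(z_0)$ is a finite complex number—a contradiction. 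Hence $z_0\in\mathcal J$, and again invariance of $\mathcal J$ under $R$ spreads this to the entire orbit.

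The main obstacle is the repelling step, specifically handling the fact that limits of meromorphic families in the spherical metric may be the constant $\infty$, and that $z_0$ could itself be $\infty$; both issues are resolved by the value-at-$z_0$ observation and an ad hoc chart change, after which the derivative computation closes the argument cleanly.
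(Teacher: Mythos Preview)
The paper does not supply its own proof of this proposition; it is stated as background from Blanchard \cite{bla} (and Beardon \cite{bear}) and immediately followed by the chain-rule remark about $\mu$. So there is nothing in the paper to compare against line by line.

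Your argument is correct and is essentially the classical textbook proof. The reduction to the fixed-point case via $f=R^n$ together with the pigeonhole-on-residues trick for passing between normality of $\{R^k\}$ and of $\{f^j\}$ is the standard move. In the (super)attracting case the local contraction estimate $|f(z)-z_0|\le c|z-z_0|$ with $|\mu|<c<1$ yields uniform convergence of iterates to $z_0$, hence normality; this covers $\mu=0$ as well. In the repelling case, the Weierstrass-type step $(f^{j_k})'(z_0)\to g'(z_0)$ is justified once you observe, as you do, that $g(z_0)=z_0\neq\infty$ forces the spherical limit to be genuinely holomorphic near $z_0$ (so locally uniform spherical convergence upgrades to locally uniform Euclidean convergence there). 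The chart change for $z_0=\infty$ and the appeal to complete invariance to propagate to the full orbit are both fine. This is exactly the argument one finds in Blanchard's survey and in Beardon, so your write-up would serve perfectly well as the missing proof.
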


One should note that $\mu$ is a constant for the orbit independent of the choice of $z_0$, this can be seen by repeatedly applying the chain rule:
\begin{equation}
\label{chain}
(R^n)'(z_0)=R'(R(z_0))\cdot R'(R^2(z_0))\cdots R'(R^n(z_0)).
\end{equation}

A class of rational maps we will focus on are so-called \textit{Misiurewicz rational maps}. They are defined by the special properties of the \textit{critical points}, i.e. all points $z\in \C$ satisfying $R'(z)=0$, which always play an essential role to understand the dynamics. Denote the set of critical points by $CP(R)$. 
And call $\Omega(R)$ the $\omega$-limit set of $CP(R)$, that means $z \in \Omega(R)$ iff there exists a $c\in CP(R)$ and an unbounded sequence $n_k$ of positive integers such that $z=\lim_{k\to\infty} R^{n_k}(c)$. Finally, let $\omega(R):=\Omega(R)\cap \J$.

\begin{Def}[\cite{DU} p.200]
A rational map $R$ is called Misiurewicz or subexpanding if $R|_{\omega(R)}$ is expanding:
\[
\exists s\geq1, \, \exists \mu>1 \text{ such that } |(R^s)'(z)|\geq \mu, \forall z\in\omega(R).
\]
\end{Def}

\section{Dynamics on SG}

In \cite{dev} one investigates the rational maps of the form $z^n+\frac{\lambda}{z^m}$ with gasket-like Julia sets for $n\geq 2, m\geq 1$ and $\lambda \in \C$. A \textit{generalized Sierpinski gasket} is described as a compact subset of the closed unit disk, obtained by a similar process to SG by removing homeomorphic copies of $N$-polygons, having a $N$-fold symmetry, and from the second stage and onward of the construction, $m$ corners of a removed region lying in the boundary of one of the removed regions in the previous stage, with $1 \leq m < N$. For example, SG is homeomorphic to such a fractal with $N=3$ and $m=1$.
It is proven in \cite{dev} that the structure of a generalized SG for those described maps occurs, when they are so called $MS$-maps.

\begin{Def}[\cite{dev} Def. 2.2]\label{MS}
A map of the form $z^n+\frac{\lambda}{z^m}$ is called \textit{Misiurewicz-Sierpinski} map or shortly MS-map if 
\begin{itemize}
    \item each critical point lies in the boundary of the immediate basin of infinity, 
    \item each of the critical points is preperiodic.
\end{itemize}
\end{Def}

One should note that an $MS$-map is always Misiurewicz.
\begin{Prop}
\label{ms}
If all critical points of a rational map $R$ are preperiodic (without indifferent periodic points), then it is Misiurewicz.
\end{Prop}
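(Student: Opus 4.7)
The plan is to unpack all the definitions and observe that the hypothesis forces $\omega(R)$ to be a finite union of repelling periodic cycles, on each of which expansion is immediate from the definition of ``repelling''.

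First I would show $\Omega(R)$ is a finite set. A rational map has only finitely many critical points, and by assumption each critical point $c$ is preperiodic, so its forward orbit $\{R^j(c)\}_{j\geq 0}$ is a finite set. The $\omega$-limit set of a single orbit lying in a finite set consists exactly of the accumulation points of that orbit, which are the points visited infinitely often, i.e. the periodic part of the orbit. Thus $\Omega(R)$ is a finite union of periodic cycles, namely the terminal cycles of the preperiodic critical orbits.

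Next I would classify these cycles. By hypothesis none of them is indifferent, so each is attracting, superattracting, or repelling. The proposition recalled above says that attracting and superattracting cycles are contained in the Fatou set $\mathcal{F}$, hence disjoint from $\J$. Since $\omega(R)=\Omega(R)\cap\J$, after intersecting we are left with a finite union of repelling periodic cycles; denote them by $O_1,\dots,O_r$ with periods $p_1,\dots,p_r$ and multipliers $\mu_i=(R^{p_i})'(z)$ for $z\in O_i$ (independent of the choice of $z\in O_i$ by \eqref{chain}), each satisfying $|\mu_i|>1$.

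Finally I would choose $s$ and $\mu$. Set $s=\mathrm{lcm}(p_1,\dots,p_r)$. For any $z\in\omega(R)$, say $z\in O_i$, the point $z$ is a fixed point of $R^{p_i}$, so applying the chain rule to $R^s=(R^{p_i})^{s/p_i}$ at $z$ gives
\begin{equation*}
  (R^s)'(z)=\bigl((R^{p_i})'(z)\bigr)^{s/p_i}=\mu_i^{\,s/p_i}.
\end{equation*}
Therefore $|(R^s)'(z)|=|\mu_i|^{s/p_i}>1$. Taking $\mu:=\min_{1\leq i\leq r}|\mu_i|^{s/p_i}>1$ gives the required uniform bound $|(R^s)'(z)|\geq\mu$ for all $z\in\omega(R)$, so $R$ is Misiurewicz.

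There is no real obstacle; the only point requiring slight care is making sure $\omega(R)$ is nonempty only along repelling cycles, which is precisely where the ``no indifferent periodic points'' hypothesis combined with the proposition on (super)attracting orbits being in $\mathcal{F}$ is used. Once finiteness and the repelling property are in hand, expansion follows mechanically from the definition of a repelling cycle.
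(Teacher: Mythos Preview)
Your proof is correct and follows essentially the same approach as the paper: identify $\omega(R)$ as a finite union of periodic cycles, argue they are repelling (since they lie in $\J$ and are not indifferent), then take a common multiple of the periods and use the chain rule to get uniform expansion. The only cosmetic differences are that you use $\mathrm{lcm}(p_1,\dots,p_r)$ where the paper uses the product, and you spell out more explicitly the appeal to Proposition~1 to exclude (super)attracting cycles from $\omega(R)$.
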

\begin{proof}
Let $c_1, \cdots,c_n$ be the critical points of $R$, which reach a cycle of periods $p_1,\cdots,p_n$ and let $z_1,\cdots,z_n$ be any elements of the respective orbits. The union of the orbits will be the $\omega$-limit set. All points contained in $\omega(R)$ will satisfy $|(R^{p_i})'(z_i)|> 1$ because they are not indifferent and in the Julia set. Let $s=\prod_{i=1}^n p_i$. Similar to $(\ref{chain})$ one has
\[
(R^s)'(z_i)=\left( R'(R(z_i))\cdot R'(R^2(z_i))\cdots R'(R^n(z_i))\right)^{s/p_i}.
\]
All of these derivatives will have absolute value greater than one. Take $\mu$ to be the minimum of them.
\end{proof}

We will always assume $R$ to be an $MS$-map. Let $\beta_\lambda$ be the boundary, a simple closed curve as proven in \cite{dev}, of the immediate basin of infinity, i.e. the outer Fatou component where points tend to infinity.  And let $\tau_\lambda$ be the boundary of the neighbourhood of $0$ that is mapped to the basin of infinity, also called \textit{trap door}. The critical points are now exactly the intersection points of $\beta_\lambda$ and $\tau_\lambda$. Moreover, define $\tau^k_\lambda=R^{-k}(\tau_\lambda)$ which consist of several connected components and are the boundaries of the removed regions from the second step and onward of the construction of the Julia set. The Julia set is now the closure of $\beta_\lambda \cup \bigcup_{k\geq0}\tau^k_\lambda$. 

In Section 4 to 7, we will mainly focus on the case $R(z)=z^2+\frac{\lambda}{z}$ with $\lambda=-\frac{16}{27}$ where the resulting Julia set is homeomorphic to the standard Sierpinski gasket. The map has three critical points $c_0=-2/3$, $c_1=1/3+0.577i$ and $c_2=1/3-0.577i$. The point $z_0=R(c_0)=4/3$ is a fixed point with $R'(z_0)=3$, hence it lies on the Julia set. Moreover, $z_1=R(c_1)$ and $z_2=R(c_2)$ form a $2$-periodic cycle and $(R^2)'(z_1)=(R^2)'(z_2)=(R^2)'(z_0)=9$. Hence, by Proposition $\ref{ms}$ the map is Misiurewicz. 
The outer topological triangle of SG with vertices $z_0,z_1,z_2$ corresponds to $\beta_\lambda$, and the first removed triangle in the center of SG has vertices $c_0,c_1,c_2$. At the next step $\tau^1_\lambda$ consists of the three smaller topological triangles removed in the second step of the construction of SG. And exactly $m=1$ corners of these removed regions lie in the boundary of the removed region in the previous stage. 

This gives rise to a new construction of $SG$ with a dynamical background. One takes the same graph approximation as in the self-similar case defined by $\Gamma_0=\beta_\lambda$ and $\Gamma_{m+1}=\Gamma_m \cup \tau_\lambda^m$ together with their vertex set  $V_0=\{z_0,z_1,z_2\}$ and $V_{m+1}=R^{-1}V_{m}$, but with a completely different mapping. The mapping for $V_2$ as an example is shown in Figure \ref{v2}.

\begin{figure}
\begin{center}
\includegraphics[scale=0.08]{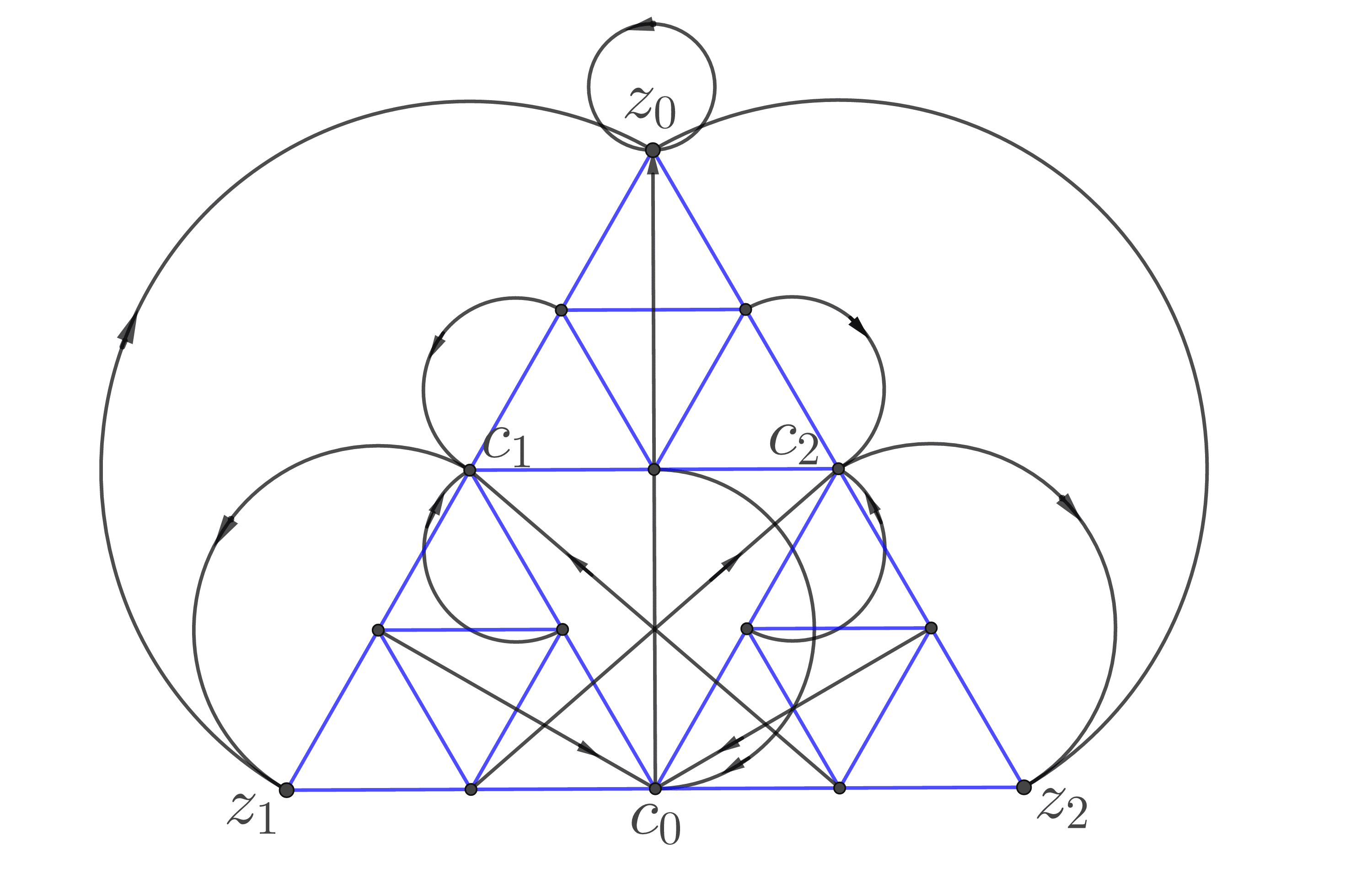}
\end{center}
\caption{Dynamics for $V_2$}
\label{v2}
\end{figure}

We should point out that, by a slight abuse of notation, we write SG for both the Julia set and the standard Sierpinski gasket since they are the same in the sense of homeomorphism. The notations $\Gamma_m, V_m$ are used in  the same way. In particular, we will also use $R$ to represent the dynamical map on the standard Sierpinski gasket inherited from the one on the Julia set.

\section{Standard Energy Form}

Since one has now a graph approximation, to construct an energy form on SG, the first step is to define the discrete graph energies:
\[
E_m(u,v)=\sum_{x\sim_m y}c_m(x,y)(u(x)-u(y))(v(x)-v(y)) 
\]
for functions $u,v:V_m \mapsto \R$ where $x\sim_m y$ means $x$ and $y$ are adjacent nodes in $V_m$ and $c_m(x,y)$ are called \textit{conductances}.

In order to respect the dynamics of $R$, we want to choose suitable conductances $c_m(x,y)$ such that the energy is invariant:
\begin{equation} \label{energyinvariance}
    E_m(u\circ R,v\circ R)=c\cdot E_{m-1}(u,v)
\end{equation}
with some constant $c$ independent of $u,v$ or $m$. We only need to consider the $u=v$ case by the polarization identity, see \cite{str}.

For $m=1$ one has:
\[
E_0(u)=c(z_0,z_1)(u(z_0)-u(z_1))^2+c(z_1,z_2)(u(z_1)-u(z_2))^2+c(z_2,z_0)(u(z_2)-u(z_0))^2,
\]
and
\begin{equation}
\begin{split}
    E_1(u\circ R) &= (c(z_1,c_1)+c(c_1,c_2)+c(c_2,z_2))(u(z_1)-u(z_2))^2 \\
    & + (c(c_1,z_0)+c(c_0,z_2)+c(c_0,c_1))(u(z_0)-u(z_1))^2 \\
    & + (c(z_0,c_2)+c(c_0,z_1)+c(c_0,c_2))(u(z_2)-u(z_0))^2,
\end{split}
\end{equation}
where we write $E_m(u)=E_m(u,u)$ for short.

There are multiple solutions for the conductances such that the invariance property \eqref{energyinvariance} is fulfilled (we will deal with this in Section $7$). The easiest solution would be to set all conductances to $1$. For higher levels, the property \eqref{energyinvariance} will still hold since the degree of the map is $3$, thus each point has $3$ preimages and always three of the $3^{m+1}$ edges in $V_m$ are identified. Hence, one obtains the identity
\begin{equation} \label{e3}
    E_m(u\circ R^k, v\circ R^k)=3^k E_{m-k}(u,v).
\end{equation}

On the other hand, since the graph energies $E_m$ are not different from the standard self-similar ones \cite{str}, to make them compatible, one still needs to renormalize $E_m$ to $\E_m$:
\begin{equation} \label{renorm}
    \E_m(u,v)=\left(\frac{3}{5}\right)^{-m}E_m(u,v), \forall u,v \in l(V_m).
\end{equation}
Here ``compatible'' means that we always have
\[
\E_{m-1}(u)=\min\{\E_m(v), \,v\in l(V_m), v|V_{m-1}=u \}, \forall u \in l(V_{m-1}),
\]
where $\E_m(u)=\E_m(u,u)$ for short. 
Call $v\in l(V_m)$ that attains the minimal energy the \textit{harmonic extension} of $u \in l(V_{m-1})$.

Combining \eqref{e3}, \eqref{renorm}, and passing $m$ to infinity one obtains an energy form $(\E, dom\E)$ on SG with
\[
dom \E =\{ u \in {C}(SG): \, \E(u)<\infty\}
\]
and
\[
\E(u,v)=\lim_{m\mapsto\infty} \E_m(u,v), \, \forall u,v \in dom\E,
\]
where the existence of the limit is due to the compatible requirement of $\E_m$.
Moreover, this form is invariant under $R$:

\begin{equation}
    \E(u\circ R^k,v\circ R^k)=5^k\E(u,v), \forall u,v \in dom\E.
\end{equation}

\section{Measure and Laplacian}

The next step is to define a suitable measure $\mu$ on SG. Again to respect the dynamics of $R$, it is reasonable to assume $\mu$ to be $R$-invariant, i.e.
\[
\mu(A)=\mu(R^{-1}A)
\]
for every Borel set $A$ and therefore
\begin{equation}\label{integral}
    \int_A f\circ R d\mu = \int_A f d\mu.
\end{equation}

A theorem by Denker and Urba\'{n}ski states that there is not much choice.

\begin{Prop}[\cite{DU}Theorem 4.6]
For a Misiurewicz rational map $R$ there exists a unique, ergodic, $R$-invariant probability measure $\mu$.
\end{Prop}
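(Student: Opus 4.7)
The plan is to construct the measure via thermodynamic formalism adapted to the Misiurewicz setting, then extract ergodicity and uniqueness from a combination of conformal-measure theory and an inverse-branch (Hopf-style) argument. The Misiurewicz hypothesis --- uniform expansion of $R^s$ on $\omega(R)$, together with preperiodicity of every critical orbit --- is exactly what replaces the global hyperbolicity that powers the classical Ruelle--Perron--Frobenius setup; it confines the non-expanding behaviour to the (finite) postcritical set and lets the rest of $\J$ be handled by standard expanding-map methods.

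First, I would construct a $\delta$-conformal measure $m$ on $\J$, where $\delta$ is essentially the Hausdorff dimension of $\J$. Consider the transfer operator
\[
\mathcal{L}_t f(z) = \sum_{R(w)=z} |R'(w)|^{-t} f(w),
\]
choose the exponent $t=\delta$ for which $\mathcal{L}_t$ has spectral radius $1$ on $C(\J)$, and extract a fixed probability measure of $\mathcal{L}_t^{\ast}$ by a Schauder--Tychonoff argument. The resulting $m$ satisfies $m(R(A)) = \int_A |R'|^{\delta}\, dm$ on any Borel set on which $R$ is injective. Next, I would produce an $R$-invariant probability measure $\mu \ll m$ by forming Cesàro averages $\tfrac{1}{n}\sum_{k=0}^{n-1}\mathcal{L}_{\delta}^{k}\mathbf{1}$ and taking a subsequential limit $\rho$; setting $d\mu = \rho\, dm$ yields invariance. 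Using uniform expansion on $\omega(R)$ together with the Koebe distortion theorem on inverse branches that avoid critical points, one shows $\rho$ is bounded above and below away from the (finite) critical grand orbit, so $\mu$ is indeed a probability measure equivalent to $m$ off that exceptional set.

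Ergodicity and uniqueness then follow from a density-point argument. Given an $R$-invariant Borel set $A$ with $\mu(A)>0$, pick a density point $z_0\in\J$ and pull $A$ back along inverse branches of $R^n$ that avoid $CP(R)$; because each critical orbit is preperiodic, there are only finitely many postcritical points to dodge, and bounded distortion turns these pullbacks into near-balls of $\mu$-density approaching $1$ that cover arbitrarily small neighbourhoods of every $z\in\J$. Hence $\mu(A)=1$, proving ergodicity. Any other $R$-invariant ergodic probability measure can be shown, via the same conformal-measure construction, to lie in the equivalence class of $m$ on the complement of the postcritical set, and ergodicity of both forces coincidence with $\mu$.

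The main obstacle is controlling $\mathcal{L}_\delta$ near critical points, where the weight $|R'|^{-\delta}$ is singular. The Misiurewicz condition is precisely the lever: each critical orbit lands in finitely many iterates in the uniformly expanding set $\omega(R)$, so one can absorb the singular behaviour into boundedly many iterates and recover distortion control on all subsequent inverse branches. The uniqueness half of the argument is where this is most delicate, since one must rule out a rival invariant measure that concentrates additional mass on the post-critical set; the $\delta$-conformality of $m$ and the preperiodicity of $CP(R)$ together make this quantitatively controllable.
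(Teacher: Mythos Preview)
The paper does not supply its own proof of this proposition; it is quoted verbatim as Theorem~4.6 of Denker--Urba\'nski and used as a black box. So there is no in-paper argument to compare against. Your outline is, in broad strokes, the Denker--Urba\'nski strategy: build a $\delta$-conformal measure $m$ via the transfer operator, average to obtain an $R$-invariant $\mu$ equivalent to $m$, and deduce ergodicity from bounded distortion of inverse branches off the finite postcritical set.

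There is, however, a genuine gap in your uniqueness step, and it mirrors an imprecision in the statement as the paper records it. Read literally, ``unique ergodic $R$-invariant probability measure'' is false: every repelling periodic cycle supports an ergodic $R$-invariant probability measure, and these are mutually singular with $m$. What Denker--Urba\'nski actually establish is uniqueness \emph{within the class of invariant probability measures absolutely continuous with respect to the $\delta$-conformal (equivalently, the $\delta$-dimensional Hausdorff) measure}. Your final paragraph attempts to force an arbitrary competing ergodic invariant measure into the equivalence class of $m$ ``via the same conformal-measure construction,'' but that construction takes no input from the rival measure and cannot constrain it; a Dirac mass on a fixed point simply lies outside its reach. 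The correct argument restricts at the outset to measures $\nu\ll m$, and then uniqueness is immediate from the ergodicity you have already proved: two ergodic invariant probability measures that are not mutually singular must coincide. For the application in the paper this suffices, since the standard self-similar measure on SG is (up to normalization) the Hausdorff measure in the relevant dimension and hence lies in the right class.
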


The following lemmas show that the standard measure $\mu$, assigning $1/3^m$ to each $m$-cell, on SG is the only suitable measure.

\begin{Lemma}
The standard measure $\mu$ is $R$-invariant.
\end{Lemma}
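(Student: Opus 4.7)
The plan is to reduce the claim to a counting identity at the level of cells and then extend by a standard measure-theoretic argument.

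First, I would recall the cellular structure: at level $m$ the Sierpinski gasket is a union of $3^m$ $m$-cells, each assigned measure $3^{-m}$, and these cells generate the Borel $\sigma$-algebra. Because $\mu$ is determined on this generating semi-algebra, it suffices to verify $\mu(R^{-1}C) = \mu(C)$ for every $m$-cell $C$; one then invokes the $\pi$-$\lambda$ theorem (or Carathéodory extension) to extend the identity to arbitrary Borel sets.

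Next I would exploit the dynamical description of the graph approximation, namely $V_{m+1} = R^{-1}V_m$ together with the fact that $R$ is a rational map of degree three. For an $m$-cell $C$, the preimage $R^{-1}(C)$ is a finite union of $(m+1)$-cells: since $R$ sends $V_{m+1}$ onto $V_m$ and maps each topological triangle bounded by a component of $\tau_\lambda^{m+1}$ (or by $\beta_\lambda \cup \tau_\lambda^m$) homeomorphically onto a topological triangle one level up, the preimage of an $m$-cell is indeed a disjoint union of $(m+1)$-cells up to the negligible boundary vertices. Counting multiplicities via the degree-three branched covering behavior (concretely visible in Figure \ref{v2}), the total number of $(m+1)$-cells in $R^{-1}(C)$ is exactly $3$. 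Therefore
\begin{equation*}
\mu(R^{-1}C) = 3 \cdot 3^{-(m+1)} = 3^{-m} = \mu(C),
\end{equation*}
which yields the desired invariance on cells.

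The main obstacle is justifying rigorously that $R$ restricted to each $(m+1)$-cell is a homeomorphism onto an $m$-cell, with no overlaps in the interior and with exactly three preimages per $m$-cell. This hinges on combining the topological description of Devaney et al.\ \cite{dev}---in particular that the boundaries $\beta_\lambda, \tau_\lambda, \tau_\lambda^k$ are disjoint simple closed curves in the relevant region and that $R$ sends $\tau_\lambda^{k+1}$ onto $\tau_\lambda^k$---with the invariance $R(\mathcal{J})=\mathcal{J}$ and the degree of $R$. Once this cell-by-cell bijectivity is in hand, the counting step and the extension to Borel sets are routine, and the lemma follows.
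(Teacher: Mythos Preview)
Your proposal is correct and follows essentially the same route as the paper: reduce to cells, observe that the preimage of an $m$-cell consists of exactly three $(m+1)$-cells, and extend to all Borel sets. Your extension step via the $\pi$--$\lambda$ theorem is in fact a bit more careful than the paper's, which simply appeals to approximation of Borel sets by finite unions of cells together with continuity of $R$.
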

\begin{proof}
Every Borel set $A$ can be approximated arbitrarily well by a finite union of cells in SG, say $A\approx\bigcup C_i$ where the $C_i$ are $m$-cells. By the continuity of $R$ it suffices to show the invariance property for $C_i$. The preimage of every $m$-cell consists of three $(m+1)$-cells. Since a $(m+1)$-cell has one third of the measure of a $m$-cell, one may conclude
\[
\mu(\cup C_i)=\mu(\cup R^{-1}C_i).
\]
\end{proof}

\begin{Lemma}
The standard measure $\mu$ is ergodic.
\end{Lemma}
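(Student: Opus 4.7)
The plan is to deduce ergodicity almost immediately from the uniqueness clause of the Denker--Urba\'{n}ski proposition together with the preceding lemma. The previous proposition asserts that on a Misiurewicz rational map there exists a \emph{unique} $R$-invariant probability measure, and that this unique measure is ergodic. So the strategy is simply to match the standard measure against this unique object.

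First I would observe that the standard measure $\mu$ is a probability measure: the whole Sierpinski gasket is the unique $0$-cell, which by definition has mass $1/3^0=1$. Next, the previous lemma establishes that $\mu$ is $R$-invariant. Thus $\mu$ qualifies as an $R$-invariant Borel probability measure on SG, and by the uniqueness clause of Proposition (\cite{DU} Theorem 4.6) it must coincide with the unique such measure, which is ergodic. Therefore $\mu$ itself is ergodic.

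The only point that requires a small sanity check is whether ``unique, ergodic, $R$-invariant probability measure'' in the cited proposition is being read as ``unique among $R$-invariant probability measures (and this unique measure happens to be ergodic)'' rather than ``unique among the ergodic $R$-invariant probability measures.'' If the latter weaker reading were intended, one would have to rule out the existence of other non-ergodic $R$-invariant probability measures by an independent argument; but since the lemmas preceding and following are clearly being used to identify $\mu$ with the Denker--Urba\'{n}ski measure, the stronger reading is the one in force, and no further work is required.

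In short, there is no substantive obstacle: the proof is a one-line consequence of the uniqueness assertion combined with the preceding lemma. I would write it as: ``By Proposition (\cite{DU} Theorem 4.6) there is only one $R$-invariant probability measure on $\mathcal{J}$, and it is ergodic. By the previous lemma the standard measure $\mu$ is $R$-invariant, and it is plainly a probability measure, so it must be this unique measure and hence ergodic.''
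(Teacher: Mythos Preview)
Your argument is correct and matches the paper's approach: the paper's proof is the single line ``This follows from Theorem 4.6 and 4.7 in \cite{DU},'' which is exactly your strategy of invoking the Denker--Urba\'{n}ski uniqueness result (together with the preceding lemma) to identify $\mu$ with their measure. The only cosmetic difference is that the paper also cites Theorem~4.7 of \cite{DU}, whereas you rely solely on the proposition as stated (Theorem~4.6); your discussion of the ``strong reading'' of uniqueness is a reasonable caveat, and in any case ergodic decomposition would close the gap even under the weaker reading.
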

\begin{proof}
This follows from Theorem 4.6 and 4.7 in \cite{DU}.
\end{proof}

Thus, given the standard energy and measure on SG, by a standard argument \cite{str}, one obtains the standard Laplacian $\Delta$, defined by the weak formulation: for a function $u \in dom\E$, say $\Delta u=f$ with $f \in {C}(SG)$ if
\begin{equation}\label{weaklaplacian}
    -\E(u,v)=\int f\cdot v \,d\mu 
\end{equation}
holds for any $v \in dom\E$ with $v|V_0=0$.

Applying $(\ref{energyinvariance})$ one has $u\in dom\, \E \Rightarrow u\circ R\in dom\,\E$ and
\[ 
    \int \Delta (u\circ R)(v\circ R) \,d\mu = -\E(u\circ R,v\circ R)=-5\E(u,v)=5 \int (\Delta u)v \,d\mu.
\]
By the invariance of the measure $(\ref{integral})$,
\[
    5 \int (\Delta u)v \,d\mu=5\int ((\Delta u)v)\circ R \, d\mu = 5 \int ((\Delta u)\circ R) (v\circ R) \, d\mu.
\]
Hence,
\[
    \int \Delta (u\circ R)(v\circ R) \,d\mu=5 \int ((\Delta u)\circ R)( v\circ R) \, d\mu. \quad 
\]
Since this shall hold for all $v \in dom\,\E$ with $v|_{V_0}=0$, one can eliminate the integral and divide by $v \circ R$ to get
\begin{equation}\label{laplinv}
    \Delta(u\circ R)=5 (\Delta u)\circ R.
\end{equation}
This is an analogue of the classical Laplacian in $\R^2$, where $\Delta(f\circ\rho)=(\Delta f)\circ\rho$ if $\rho$ is an orthogonal transformation, meaning it preserves the inner product. And in our case the energy $\E(u,v)$ serves as the inner product which is $R$-invariant. 

One can now prove an important property about the spectrum of the Laplacian.
\begin{Prop}
If $u$ is an eigenfunction of $\Delta$ with eigenvalue $\lambda$, then $u\circ R$ is also an eigenfunction of $\Delta$ with eigenvalue $5 \lambda$. In particular, $5\Sigma \subset \Sigma$, where $\Sigma$ is the spectrum of $\Delta$.
\end{Prop}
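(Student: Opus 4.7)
\medskip

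The plan is to deduce the proposition as an essentially immediate consequence of the invariance identity (\ref{laplinv}) established just above the statement. First, suppose $u \in dom\,\E$ is an eigenfunction with $\Delta u = \lambda u$. By the discussion preceding (\ref{laplinv}), $u \in dom\,\E$ implies $u \circ R \in dom\,\E$, so $\Delta(u \circ R)$ is defined. Applying (\ref{laplinv}) and then substituting the eigenvalue equation gives
\begin{equation*}
\Delta(u \circ R) \;=\; 5\,(\Delta u) \circ R \;=\; 5\,(\lambda u) \circ R \;=\; 5\lambda\,(u \circ R),
\end{equation*}
which is exactly the statement that $u \circ R$ is an eigenfunction with eigenvalue $5\lambda$. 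One also needs $u \circ R \not\equiv 0$, which is immediate from the surjectivity of $R\colon SG \to SG$ together with $u \not\equiv 0$.

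The only genuinely non-routine point is to make sure the boundary behaviour required by whatever version of the spectrum $\Sigma$ is intended is preserved under the substitution $u \mapsto u \circ R$. For the Dirichlet spectrum, I would verify that $u|_{V_0} = 0$ implies $(u \circ R)|_{V_0} = 0$: since $R(z_0) = z_0$ and $\{z_1, z_2\}$ is a $2$-cycle under $R$, the set $V_0 = \{z_0, z_1, z_2\}$ is $R$-invariant, so $u(R(z_i)) = 0$ for every $i$. For the Neumann case one argues similarly using the fact that the invariance of $\E$ also preserves the natural normal derivative structure at $V_0$.

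Finally, for the spectrum containment, let $\lambda \in \Sigma$ with corresponding eigenfunction $u$; the above shows $5\lambda$ is an eigenvalue with eigenfunction $u \circ R$, so $5\lambda \in \Sigma$, giving $5\Sigma \subset \Sigma$. The main ``obstacle'' is really not an obstacle at all but a bookkeeping check: confirming that $u \circ R$ satisfies the boundary conditions implicit in the definition of $\Sigma$, which works out because $V_0$ is $R$-invariant as a set.
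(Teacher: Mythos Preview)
Your proof is correct and follows the same approach as the paper: both apply the invariance identity \eqref{laplinv} directly to the eigenvalue equation to obtain $\Delta(u\circ R)=5\lambda(u\circ R)$. The paper's proof is a single line and omits the nonvanishing and boundary-condition checks you supply, so your version is in fact slightly more careful than the original.
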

\begin{proof}
Given an eigenfunction $u$ one obtains with \eqref{laplinv} that
\[
    -\Delta(u\circ R)=-5 (\Delta u)\circ R= -5\lambda(u\circ R).
\]
\end{proof}

\section{Iterated Function System}

Let $\{F_i\}_{i=0,1,2}$ be the iterated function system (IFS) of SG (by looking at SG as the standard Sierpinski gasket), i.e. $F_i z=\frac{1}{2}(z-q_i)+q_i, \, i=0,1,2$, where $q_0,q_1,q_2$ are the three vertices of a triangle. Then SG satisfies the self-similar identity
\[
SG=\bigcup_{i=0}^2 F_i SG.
\]
Same as we have done before, $\{F_i\}_{i=0,1,2}$ can also be interpreted as an IFS of the Julia set SG. In this sense, by an easy observation, one can find that 
\begin{equation}\label{rf}
R(z)=F_0^{-1}(z), \quad \forall z\in F_0SG.
\end{equation}

On the other hand, all maps of the form $z^2+\frac{\lambda}{z}$ have the symmetry properties:
\begin{equation}\label{f2}
    R(\omega z)=\omega^2 R(z),
\end{equation}
\begin{equation}\label{f3}
    R(\omega^2 z)=\omega R(z),
\end{equation}
where $\omega$ is the rotation of a third circle, $\omega=e^{\frac{2}{3}\pi i}$. Note that one can express the maps $F_1$ and $F_2$ in terms of $F_0$ and rotations by
\begin{equation}\label{f01}
    F_1^{-1}=\omega \circ F_0^{-1} \circ \omega^2,
\end{equation}
\begin{equation}\label{f02}
    F_2^{-1}=\omega^2 \circ F_0^{-1} \circ \omega.
\end{equation}

If $z \in F_0SG$, then $\Tilde{z}:=\omega z$ lies in $F_1SG$. Now $(\ref{f2})$ becomes
\[
R(\Tilde{z})=\omega^2 R(\omega^2 \Tilde{z}).
\]
Since $\omega^2 \Tilde{z}=z \in F_0 SG$, one may apply $(\ref{rf})$ to get
\[
R(\Tilde{z})=\omega^2 \circ F_0^{-1}\circ \omega^2(\Tilde{z}).
\]
And $(\ref{f01})$ yields
\begin{equation}\label{F1}
    R(\Tilde{z})=\omega \circ F_1^{-1}(\Tilde{z}).
\end{equation}
Similarly, for $\doubletilde{z}:=\omega^2 z \in F_2 SG$, equation $(\ref{f3})$ becomes

\[
R(\doubletilde{z})=\omega F_0^{-1}(\omega \doubletilde{z})=\omega^2\circ \omega^2 \circ F_0^{-1} \circ \omega (\doubletilde{z}).
\]
And $(\ref{f02})$ yields
\begin{equation}\label{F2}
    R(\doubletilde{z})=\omega^2 \circ F_2^{-1}(\doubletilde{z}).
\end{equation}

Thus by \eqref{rf}, \eqref{F1}, \eqref{F2}, the preimage $R^{-1}$ satisfies:

\begin{equation}\label{inverse0}
(R|_{F_0SG})^{-1}=F_0,
\end{equation}
\begin{equation}
(R|_{F_1SG})^{-1}=F_1 \circ \omega^2,
\end{equation}
\begin{equation}\label{inverse2}
(R^{-1}|_{F_2SG})^{-1}=F_2 \circ \omega.
\end{equation}

Hence, one obtains a new IFS $\{\Tilde{F}_i\}_{i=0}^2$ of SG with $\Tilde{F}_0=F_0, \Tilde{F}_1=F_1 \circ \omega^2$ and $\Tilde{F}_2=F_2\circ \omega $ since

\begin{equation}\label{IFS}
    SG=R^{-1}(SG)=\bigcup_{i=0}^2 \Tilde{F}_i SG.
\end{equation}

Note that for $V_m$ we introduced before, it holds that

\begin{equation}
    V_{m+1}=R^{-1}(V_m)=\bigcup_{i=0}^2 \Tilde{F}_i V_m.
\end{equation}

\section{Self-Similar Energy Forms}

In this section, we aim to describe the $R$-invariant energy forms on SG. Let $(\E,dom\E)$ be such a form. Then for any $u\in dom\E$, we have $u\circ R \in dom\E$ and $\E(u\circ R)=\rho\E(u)$ for some constant $\rho$ independent of $u$. Note that $u=u\circ R \circ \Tilde{F}_i$ for any $i=0,1,2$. This gives
\[
\E(u\circ R)=\rho \E(u)=\rho \sum_{i=0}^{2} a_i \E(u\circ R \circ \Tilde{F}_i)
\]
for any probability weight $(a_0,a_1,a_2)$. Since $u\circ R$ can run over all functions in $dom\E$, the following self-similar identity then holds:
\begin{equation}\label{rinvid}
    \E(u)=\sum_i r_i^{-1}\E(u\circ \Tilde{F}_i)
\end{equation}
with $r_i^{-1}=\rho a_i$.

Due to the above reason, in this section, we want to look at all the self-similar energy forms on SG satisfying \eqref{rinvid} with $r_i>0$, which in turn are $R$-invariant with $\rho=\sum_{i=0}^{2} r_i^{-1}$. Call $r_i$ the \textit{renormalization weight} as we did in the self-similar case \cite{str}.

Equivalently, one seeks a solution to the following renormalization problem. Given initial energy $\E_0$ with conductances $c(x,y)$ on $V_0$ one defines the energy $\E_1$ on $V_1$ by 
\begin{equation}\label{e7}
\E_1(u)=\sum_{x \sim_1 y} c_1(x,y)(u(x)-u(y))^2
\end{equation}
for
\[
c_1(\Tilde{F}_ix,\Tilde{F}_i y)=r_i^{-1} c(x,y)  \text{ if } \, x,y \in V_0.
\]
One says that $\E_0$ solves the renormalization problem with given weights  $r_i>0$ if there exists $\lambda>0$ such that
\begin{equation}\label{renormalization}
    \E_1(\Tilde{u})=\lambda^{-1} \E_0(u),\quad\forall u \in l(V_0),
\end{equation}
holds for the harmonic extension $\Tilde{u}$ on $V_1$. As a result, $\E_0$ and $\E_1$ are compatible. After finding out the constant $\lambda$, one corrects $r_i$ to $\Tilde{r}_i=\lambda^{-1}r_i$. Then the graph energy $\E_m$ can be defined in a similar way for higher levels which converge to an energy on SG. This problem has been well studied for SG with the standard IFS and the IFS composed with twists \cite{twi}.

In order to determine the existence of $\lambda$ in $(\ref{renormalization})$ one uses the electric network interpretation and  $\Delta - Y$ transformations \cite{str}.
Let $c_0,c_1,c_2$ be the initial conductances on $V_0$. Denote $w_i=c_i^{-1}$ the initial resistances on $V_0$. Since the weights will be corrected afterwards anyway, one may set $r_0=1$. 

In Figure $\ref{v0fig}$, the $\Delta - Y$ transformation is shown for $V_0$. Without loss of generality, one may set $\frac{w_1w_2}{D}=1$ and denote $s_1=\frac{w_0w_2}{D}$ and $s_2=\frac{w_0w_1}{D}$ where $D=w_0+w_1+w_2$. 

\begin{figure}
\begin{center}
\includegraphics[scale=0.1]{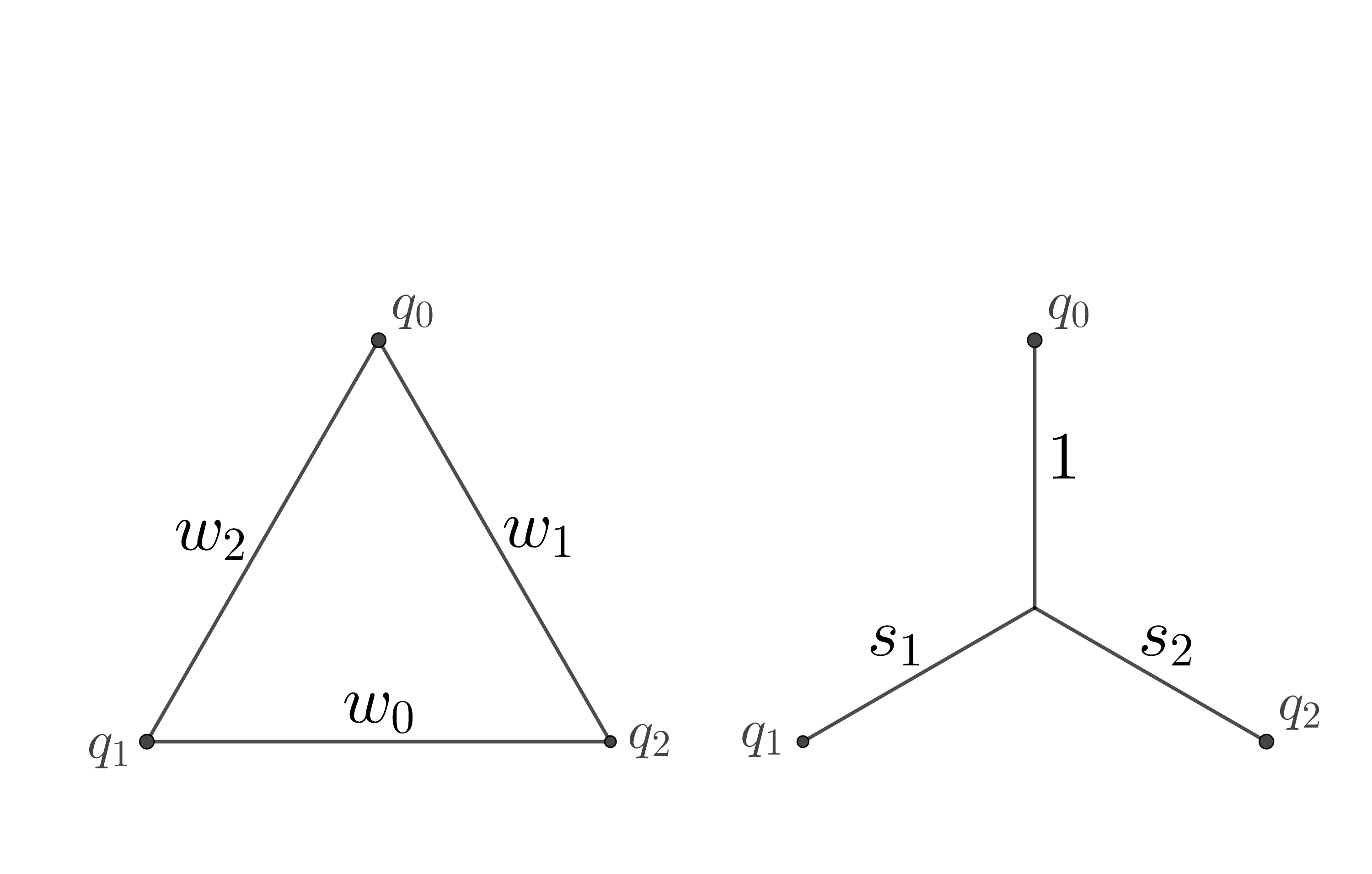}
\end{center}
\caption{Network transformation on $V_0$}
\label{v0fig}
\end{figure}

\begin{figure}[H]
\begin{center}
\includegraphics[scale=0.1]{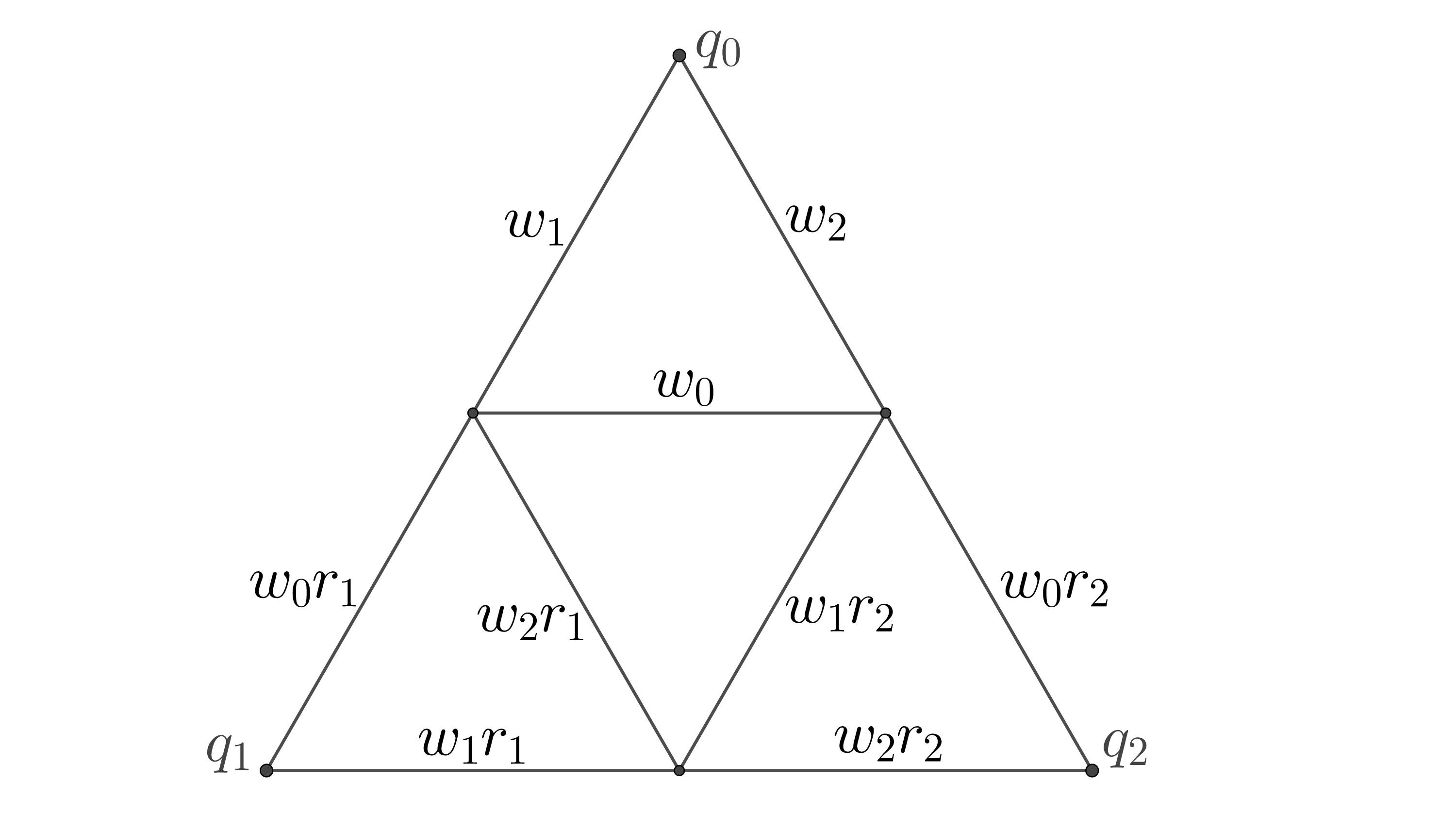}
\end{center}
\caption{Resistances on $V_1$}
\label{v1fig}
\end{figure}

\begin{figure}[H]
\begin{center}
\includegraphics[scale=0.13]{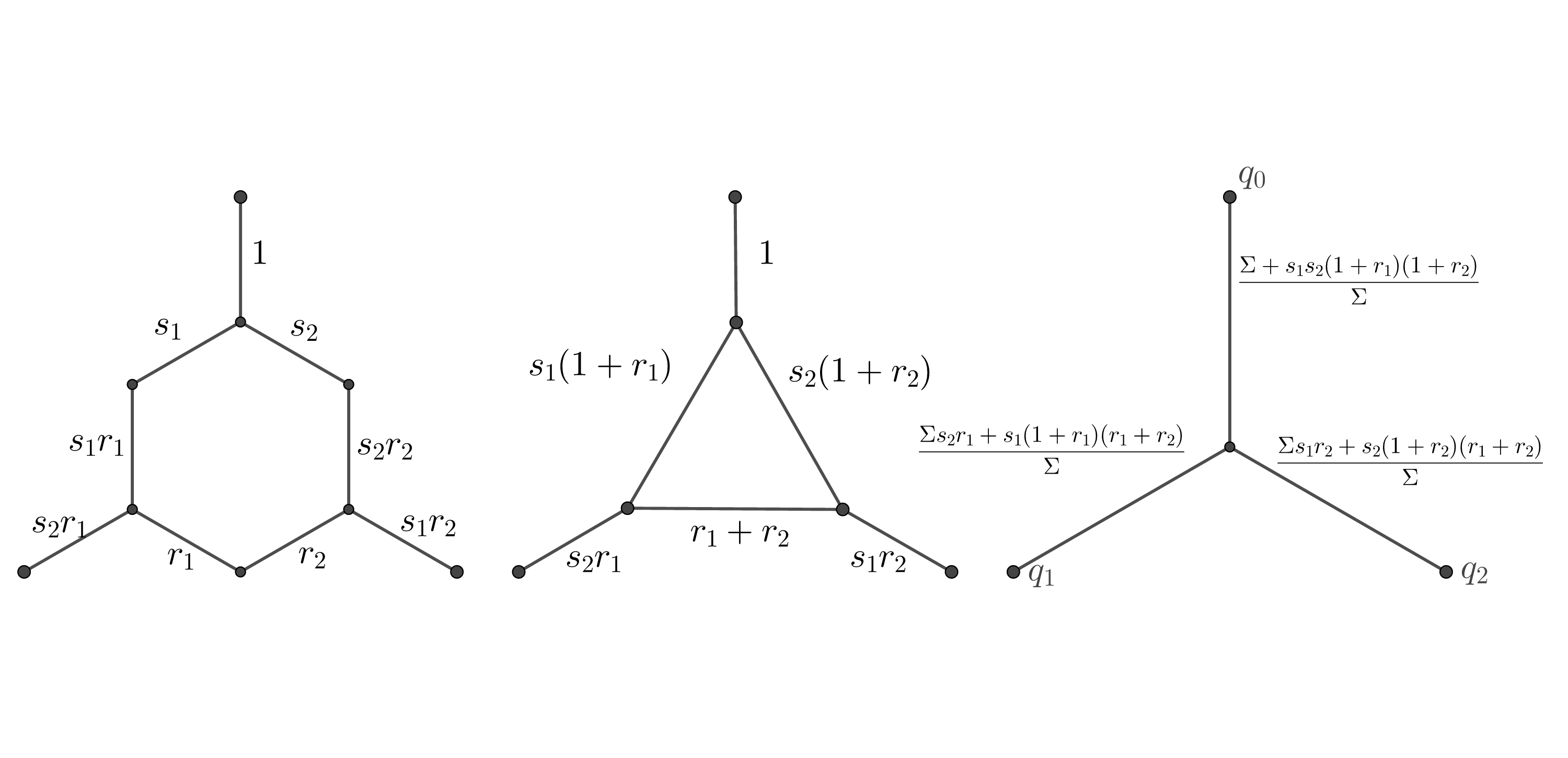}
\end{center}
\caption{Network transformation on $V_1$}
\label{step2}
\end{figure}

Corresponding to $(\ref{e7})$, the resistances on $V_1$ are shown in Figure $\ref{v1fig}$. Now one applies $\Delta - Y$ transformations as illustrated in Figure \ref{step2}. For simplicity, abbreviate $\Sigma=r_1+r_2+s_1+s_2+s_1 r_1+s_2 r_2$. The resulting network should be a multiple of the $Y$-network in Figure $\ref{v0fig}$ in accordance to $(\ref{renormalization})$. Hence, one obtains the system of equations:

\begin{equation}\label{mse1}
    \Sigma+s_1s_2(1+r_1)(1+r_2)=\lambda \Sigma,
\end{equation}
\begin{equation}\label{mse2}
    \Sigma s_2r_1+s_1(1+r_1)(r_1+r_2)=\lambda s_1\Sigma,
\end{equation}
\begin{equation}\label{mse3}
    \Sigma s_1r_2+s_2(1+r_2)(r_1+r_2)=\lambda s_2 \Sigma.
\end{equation}
One can use $(\ref{mse1})$ to determine $\lambda$. Since the weights and conductances are positive,  $\lambda>1$ and so $\Tilde{r}_0=\lambda^{-1}<1$. Moreover, one has $r_1<\lambda \frac{s_1}{s_2}$ and $r_2<\lambda \frac{s_2}{s_1}$, so at least one of $\Tilde{r}_1$ and $\Tilde{r}_2$ is smaller than $1$. The remaining equations are just:
\begin{equation}\label{mse5}
\Sigma s_2r_1+s_1(1+r_1)(r_1+r_2)=\Sigma s_1 + (1+r_1)(1+r_2)s_1^2s_2,
\end{equation}
\begin{equation}\label{mse6}
    \Sigma s_1r_2+s_2(1+r_2)(r_1+r_2)=\Sigma s_2 + (1+r_1)(1+r_2)s_1s_2^2.
\end{equation}

An easy observation is that $(\ref{mse5})$ is linear in $r_2$ and $(\ref{mse6})$ is linear in $r_1$. This also luckily occurs for SG with the IFS composed with twists \cite{twi}. Using equation $(\ref{mse5})$, one can plug in the expression for $r_2$ into $(\ref{mse6})$ to obtain one equation in three variables. Given initial conductances, one can investigate whether suitable weights exist. 

However, we could not solve this equation for one variable in the general case as no simple factorization like in \cite{twi} has been observed. Hence, we specify on a specific case that is respecting another symmetric identity of the map $R$:

\begin{equation}\label{sym_conj}
  R(\overline{z})=\overline{R(z)}.
\end{equation}
In terms of the energy we seek
\begin{equation}
 \E(u \circ \mathfrak{c})=\E(u), \quad \forall u \in dom\E,
\end{equation}
where $\mathfrak{c}$ denotes the reflection along the real axis (the line intersecting $z_0$ and $c_0$).

Looking at $V_0$ this means the following equations are equal:
\[
\E_0(u)=c_2(u(z_0)-u(z_1))^2+c_0(u(z_1)-u(z_2))^2+c_1(u(z_2)-u(z_0))^2
\]
\[
\E_0(u\circ\mathfrak{c})=c_2(u(z_0)-u(z_2))^2+c_0(u(z_2)-u(z_1))^2
+c_1(u(z_0)-u(z_1))^2.
\]
Thus $c_1=c_2$. This is also sufficient for higher levels and one can see that this implies $s_1=s_2:=s$, too. Then the right sides of $(\ref{mse5})$ and $(\ref{mse6})$ are equal and one obtains
\[
    \Sigma sr_1+s(1+r_1)(r_1+r_2)=\Sigma sr_2+s(1+r_2)(r_1+r_2),
\]
and thus
\[
\Sigma s(r_1-r_2)=s(r_1+r_2)(r_2-r_1).
\]
Since $\Sigma$ and all other variables are positive, one must have $r_1-r_2=0$. So one has equal weights $r:=r_1=r_2$. Equations $(\ref{mse5})$ and $(\ref{mse6})$ have now the same form:
\begin{equation}
    (2r+2sr+2s)sr+s(1+r)2r=(2r+2sr+2s)s+(1+r)^2s^3.
\end{equation}
After dividing by $s$, it simplifies to
\begin{equation}
    (r+1)^2s^2+(2-2r^2)s-4r^2=0
\end{equation}
with the two solutions
\[
    s_\pm=\frac{r-1\pm\sqrt{5r^2-2r+1}}{r+1}.
\]
For positive $r$, one has then the unique positive solution $s_+$. 

We summarize our results into a theorem.
\begin{Thm}
For any positive weights $(r_0,r_1,r_2)$ with $r_1=r_2$, there exists a unique positive $\lambda$, such that for the weights $\lambda^{-1}(r_0,r_1,r_2)=(\Tilde{r}_0,\Tilde{r}_1,\Tilde{r}_2)$ there is a unique (up to a constant multiple) nondegenerate energy form $(\E,dom\E)$ on SG 
satisfying
\[
\E(u)=\sum_{i=0}^2 \Tilde{r}_i^{-1} \E(u \circ \Tilde{F}_i), \quad \forall u \in dom\E.
\]
Moreover, $0 < \Tilde{r}_i < 1 \, for \, i=0,1,2$.
This produces all the symmetric and $R$-invariant energy forms on SG, i.e. 
\begin{equation}
    \E(u\circ \mathfrak{c})=\E(u) \text{ and } \E(u\circ R)=\sum_{i=0}^2 \Tilde{r}_i^{-1} \E(u), \quad \forall u \in dom\E.
\end{equation}
\end{Thm}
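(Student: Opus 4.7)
The strategy is to assemble the algebraic reduction already carried out in the preceding paragraphs into a complete Kigami-style construction, and then verify both claimed invariances together with the converse direction.

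Given positive weights $(r_0, r_1, r_2)$ with $r_1 = r_2 =: r$, a global rescaling reduces the computation to the case $r_0 = 1$ (this only rescales $\lambda$ and leaves the $\tilde{r}_i$ unchanged). The conjugation symmetry forces $c_1 = c_2$ at level zero, equivalently $s_1 = s_2 =: s$, and the excerpt's symmetrization of \eqref{mse5}--\eqref{mse6} collapses them to the quadratic $(r+1)^2 s^2 + (2 - 2r^2)s - 4r^2 = 0$ with unique positive root $s_+$. This determines the initial conductances $(c_0, c_1, c_2)$ up to an overall scalar, and \eqref{mse1} yields $\lambda = 1 + (1+r)^2 s^2/\Sigma > 1$, so $\tilde{r}_0 = \lambda^{-1} < 1$. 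Specializing the excerpt's bound $r_i < \lambda \, s_i/s_j$ (from \eqref{mse2}) to $s_1 = s_2$ gives $r < \lambda$, hence $\tilde{r}_1 = \tilde{r}_2 = r/\lambda < 1$ as well.

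Next I would extend the compatible pair $(\E_0, \E_1)$ to graph energies $\E_m$ on $V_m$ by iterating the renormalization along words in the IFS, setting
\begin{equation*}
c_m(\tilde{F}_{i_1} \cdots \tilde{F}_{i_m} x, \tilde{F}_{i_1} \cdots \tilde{F}_{i_m} y) = \Bigl(\prod_{k=1}^{m} \tilde{r}_{i_k}^{-1}\Bigr) c(x,y), \qquad x, y \in V_0.
\end{equation*}
Level-zero compatibility propagates to every level, so the sequence $\E_m(u|_{V_m})$ is monotone nondecreasing, and by the standard Kigami construction \cite{str} (using the strict contraction $\tilde{r}_i < 1$) the limit $\E(u) = \lim_m \E_m(u|_{V_m})$ defines a nondegenerate local regular Dirichlet form $(\E, dom\,\E)$ on SG. The self-similar identity $\E(u) = \sum_i \tilde{r}_i^{-1} \E(u \circ \tilde{F}_i)$ is built in, and $R$-invariance follows from \eqref{inverse0}--\eqref{inverse2}, which give $u = (u \circ R) \circ \tilde{F}_i$ on each cell $\tilde{F}_i SG$, whence
\begin{equation*}
\E(u \circ R) = \sum_{i=0}^{2} \tilde{r}_i^{-1} \E\bigl((u \circ R) \circ \tilde{F}_i\bigr) = \sum_{i=0}^{2} \tilde{r}_i^{-1} \E(u).
\end{equation*}
Conjugation symmetry is preserved at every finite level by $c_1 = c_2$ together with the $\mathfrak{c}$-equivariance $\mathfrak{c} \circ \tilde{F}_0 = \tilde{F}_0 \circ \mathfrak{c}$, $\mathfrak{c} \circ \tilde{F}_1 = \tilde{F}_2 \circ \mathfrak{c}$ of the IFS (a direct check using $\mathfrak{c}(\omega) = \omega^2$ and the explicit formulas for $\tilde{F}_1, \tilde{F}_2$), and it passes to the limit.

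For the converse, any symmetric $R$-invariant energy form satisfies the self-similar identity \eqref{rinvid} by the derivation at the start of Section~$7$. Applying \eqref{rinvid} to both $u$ and $u \circ \mathfrak{c}$, using the $\mathfrak{c}$-equivariance of the IFS together with $\E(u \circ \mathfrak{c}) = \E(u)$, yields
\begin{equation*}
(\tilde{r}_1^{-1} - \tilde{r}_2^{-1})\bigl(\E(u \circ \tilde{F}_2) - \E(u \circ \tilde{F}_1)\bigr) = 0, \qquad \forall u \in dom\,\E,
\end{equation*}
and picking any $u$ with asymmetric energy distribution across $\tilde{F}_1 SG$ and $\tilde{F}_2 SG$ forces $\tilde{r}_1 = \tilde{r}_2$. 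The uniqueness of $s_+$ then pins down $\E$ up to a multiplicative constant, placing the form in the family constructed above. The most delicate point is the limit passage in the second step: one must confirm that the strict contraction $\tilde{r}_i < 1$ suffices both for the harmonic extension principle to produce a genuine Dirichlet form and for finite-energy nonconstant functions to separate points of SG. This is standard for post-critically finite fractals like SG but warrants careful verification in the present dynamical setup, where the IFS $\{\tilde{F}_i\}$ is twisted by rotations relative to the usual self-similar IFS.
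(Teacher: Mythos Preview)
Your proposal is correct and follows essentially the same route as the paper, which presents its proof as the running discussion of Section~7 culminating in the theorem statement: set up the $\Delta$--$Y$ renormalization equations \eqref{mse1}--\eqref{mse3}, reduce under $s_1=s_2$ to the quadratic $(r+1)^2 s^2+(2-2r^2)s-4r^2=0$ with unique positive root $s_+$, and read off $\lambda$ and the bounds on $\Tilde r_i$.

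One point where you diverge from the paper is worth noting. For the converse direction (symmetric $R$-invariant $\Rightarrow$ $r_1=r_2$), the paper works at the level of the renormalization equations: it imposes $s_1=s_2$ in \eqref{mse5}--\eqref{mse6}, subtracts, and obtains $\Sigma s(r_1-r_2)=s(r_1+r_2)(r_2-r_1)$, forcing $r_1=r_2$ by positivity. You instead argue at the level of the limit form, using the $\mathfrak c$-equivariance $\mathfrak c\circ\Tilde F_1=\Tilde F_2\circ\mathfrak c$ of the twisted IFS together with the self-similar identity to obtain $(\Tilde r_1^{-1}-\Tilde r_2^{-1})(\E(u\circ\Tilde F_2)-\E(u\circ\Tilde F_1))=0$. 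Both are valid; the paper's computation is more elementary (finite linear algebra on $V_1$), while yours is cleaner conceptually and avoids re-entering the electrical-network bookkeeping. You also make explicit that in the symmetric case $s_1=s_2$ the inequality $r_i<\lambda\, s_i/s_j$ from \eqref{mse2} gives $\Tilde r_1=\Tilde r_2<1$, sharpening the paper's general ``at least one of $\Tilde r_1,\Tilde r_2$ is smaller than $1$'' to both, which is what is actually needed for the Kigami limit.
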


\section{The general case}

The discussion in \cite{dev} gives rise to a large class of Julia sets that have a generalized Sierpinski gasket structure. In this section we will generalize the results specific for $SG$ and find an IFS for a brought set of rational maps that will be a foundation to define energy forms and Laplacians on their Julia sets.

The first step is to look at the symmetries. 

\begin{Prop}
For a rational map $R(z)=z^n+\frac{\lambda}{z^m}$ where $n\geq2 $, $m\geq 1$ and $\lambda\in\C$, let $N:=n+m$ and $\omega_N=e^{\frac{2\pi}{N}i}$. Then for $i=1,...,N-1$ one has the symmetry identities:
\begin{equation}\label{generalsymmetry}
    R(\omega_N^iz)=\omega_N^{in}R(z).
\end{equation} 
\end{Prop}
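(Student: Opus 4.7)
The plan is to verify the identity by direct substitution, using only the definition of $R$ and the fact that $\omega_N$ is an $N$th root of unity with $N=n+m$.

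First I would write out $R(\omega_N^i z)$ explicitly using the definition of $R$:
\begin{equation*}
R(\omega_N^i z) = (\omega_N^i z)^n + \frac{\lambda}{(\omega_N^i z)^m} = \omega_N^{in} z^n + \omega_N^{-im}\,\frac{\lambda}{z^m}.
\end{equation*}
Next I would factor $\omega_N^{in}$ out of both terms, which requires rewriting the coefficient $\omega_N^{-im}$ of the second term as $\omega_N^{in}$ multiplied by something. The relation $\omega_N^{-im} = \omega_N^{in}\cdot \omega_N^{-i(n+m)} = \omega_N^{in}\cdot \omega_N^{-iN}$ does exactly this, and then $\omega_N^{-iN} = (\omega_N^N)^{-i} = 1$ since $\omega_N=e^{2\pi i/N}$ is by construction a primitive $N$th root of unity. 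So $\omega_N^{-im} = \omega_N^{in}$, and substituting gives
\begin{equation*}
R(\omega_N^i z) = \omega_N^{in}\Bigl(z^n + \frac{\lambda}{z^m}\Bigr) = \omega_N^{in} R(z),
\end{equation*}
valid for every integer $i$, in particular for $i=1,\dots,N-1$.

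There is really no main obstacle here; the result is a one-line algebraic identity that encodes the balancing $n+m=N$ which is built into the definition of $N$. The only thing worth flagging explicitly is that the identity holds for all integers $i$ (so the range $1\le i\le N-1$ in the statement is simply the non-trivial representatives modulo $N$), and that nothing is assumed about $\lambda$ beyond $\lambda\in\C$. In particular the map need not be an MS-map for the symmetry to hold, which justifies using \eqref{generalsymmetry} later as a purely algebraic tool when generalizing the IFS construction of Section~7 from the case $(n,m)=(2,1)$ to arbitrary $(n,m)$.
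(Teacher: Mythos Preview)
Your proof is correct and follows essentially the same direct-substitution approach as the paper: expand $R(\omega_N^i z)$, factor out $\omega_N^{in}$, and use $\omega_N^{-i(n+m)}=1$ to simplify. The additional remarks about the identity holding for all integers $i$ and requiring no hypothesis on $\lambda$ are accurate and harmless.
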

\begin{proof}
\begin{align*}
    R(\omega_N^iz)&=\omega_N^{in}z^n+\omega_N^{-im}\frac{\lambda}{z^m}=\omega_N^{in}(z^n+\omega_N^{-im-in}\frac{\lambda}{z^m}) \\
                  &=\omega_N^{in}(z^n+\frac{\lambda}{z^m})=\omega_N^{in}R(z).
\end{align*}
\end{proof}

Furthermore, if $\lambda$ is real, then one has the symmetry with the complex conjugate as in $(\ref{sym_conj})$ and the same with the derivative $R'(z)$. As a consequence, the Julia set will have an $N$-fold symmetry, degree $N$ and $N$ critical points (except $0$ and $\infty$). If the map $R$ is an $MS$-map, then the critical points $c_0,...,c_{N-1}$ form the intersection of $\beta_\lambda$ and $\tau_\lambda$, where $\beta_\lambda$ is the boundary of the immediate basin of infinity and $\tau_\lambda$ is the Fatou component containing the origin. Then $\tau_\lambda^1=R^{-1}\tau_\lambda $ consists of $N$ components.  One continues naming the components in counter-clockwise order from $I_0$ to $I_{N-1}$.  As an example one can look at the Julia set in Figure $\ref{ms4}$, which has degree $4$.

\begin{figure}[H]
\begin{center}
\includegraphics[scale=0.8 ]{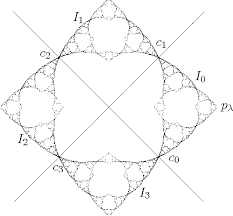}
\end{center}
\caption{The Julia set of $z^2+\frac{\lambda}{z^2}$ with $\lambda\approx-0.36428$ satisfying $R^4(c_0)=R^3(c_0)$}
\label{ms4}
\end{figure}

\begin{Thm}
The Julia set $\mathcal{J}$ of an $MS$-map $R(z)=z^n+\frac{\lambda}{z^m}, \lambda\in\mathbb{C}$, satisfies the self-similar identity:

\begin{equation}
\mathcal{J}=R^{-1}\mathcal{J}=\bigcup_{i=0}^{N-1}F_i\mathcal{J}
\end{equation}

with $F_i(z)=\omega_N^iF_0(\omega_N^{N-in }z)$ and $\omega_N=e^{\frac{2\pi}{N}i}$.
\end{Thm}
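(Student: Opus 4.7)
The plan is to repeat the argument of Section 6 for the special degree-3 map, but using the $N$-fold symmetry identity $R(\omega_N^i z)=\omega_N^{in}R(z)$ in place of (\ref{f2}) and (\ref{f3}). By complete invariance, $\mathcal{J}=R^{-1}\mathcal{J}$, so the only thing to prove is the decomposition $\mathcal{J}=\bigcup_{i=0}^{N-1}F_i\mathcal{J}$, and this amounts to identifying the $N$ inverse branches of $R$ explicitly.

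First, I would observe that applying the symmetry identity to the invariance equation forces $\omega_N\mathcal{J}=\mathcal{J}$, so $\mathcal{J}$ carries the full $N$-fold rotational symmetry. Let $\mathcal{J}_0$ denote the minimal homeomorphic copy of $\mathcal{J}$ containing the component $I_0$ (its existence and uniqueness come from the topological picture in \cite{dev}); by rotational symmetry $\mathcal{J}=\bigcup_{i=0}^{N-1}\omega_N^i\mathcal{J}_0$. Define $F_0:\mathcal{J}\to\mathcal{J}_0$ as the inverse homeomorphism of $R|_{\mathcal{J}_0}$, so $F_0^{-1}=R$ on $\mathcal{J}_0$. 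For $w\in\mathcal{J}$, I then look for its preimage lying in $\omega_N^i\mathcal{J}_0$: writing this preimage as $z=\omega_N^i\tilde z$ with $\tilde z\in\mathcal{J}_0$, the symmetry identity yields $w=R(z)=\omega_N^{in}R(\tilde z)=\omega_N^{in}F_0^{-1}(\tilde z)$, whence $\tilde z=F_0(\omega_N^{-in}w)=F_0(\omega_N^{N-in}w)$ and therefore $z=\omega_N^iF_0(\omega_N^{N-in}w)=F_i(w)$. This matches the formula in the theorem and shows $F_i\mathcal{J}=\omega_N^i\mathcal{J}_0$, so $\bigcup_i F_i\mathcal{J}=\bigcup_i\omega_N^i\mathcal{J}_0=\mathcal{J}$.

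The main obstacle is the foundational step of producing $F_0$ as a genuine homeomorphism from all of $\mathcal{J}$ onto the wedge $\mathcal{J}_0$ with $F_0^{-1}=R|_{\mathcal{J}_0}$. Once this is in hand the algebraic verification above is essentially bookkeeping, but the underlying topological fact---that $R$ restricts to a bijection $\mathcal{J}_0\to\mathcal{J}$ and that the $N$ rotational wedges tile $R^{-1}\mathcal{J}$ meeting only along the (images of the) critical set---is nontrivial and requires the generalized Sierpinski gasket description of \cite{dev} together with the degree-$N$ branching of $R$ built into the $MS$-map hypothesis. With that input cited, the $N$-fold symmetry then supplies the explicit inverse branches $F_i(z)=\omega_N^iF_0(\omega_N^{N-in}z)$ and hence the self-similar identity.
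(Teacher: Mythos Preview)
Your symmetry computation deriving $F_i(z)=\omega_N^i F_0(\omega_N^{N-in}z)$ from the identity $R(\omega_N^i z)=\omega_N^{in}R(z)$ is exactly what the paper does in the second half of its proof. Where you differ is in the foundational step of constructing $F_0$. You define $\mathcal{J}_0$ as ``the minimal homeomorphic copy of $\mathcal{J}$ containing $I_0$'' and defer to the topological picture in \cite{dev}, acknowledging this as the main obstacle. The paper instead works analytically: it cites \cite{dl} for the fact that $\beta_\lambda\cap\tau_\lambda$ is precisely the critical set, then invokes Carath\'{e}odory's theorem to produce external rays $\gamma_i$ landing at the critical values $v_i$, so that $D=\mathbb{P}^1\setminus\bigcup_i\overline{\gamma}_i$ splits into $N$ simply connected components $D_0,\ldots,D_{N-1}$ with $I_i\subset D_i$. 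The $F_i$ are then the inverse branches of $R$ sending $D$ into $D_i$, and $\mathcal{J}_i=F_i\mathcal{J}$ gives the decomposition directly. This external-ray construction is more explicit than your appeal to \cite{dev}: it avoids the somewhat vague phrase ``minimal homeomorphic copy'' and provides concrete simply connected domains on which the inverse branches are well defined by standard covering theory. Your route is the natural generalization of the paper's own Section~6 argument for SG and is correct once the \cite{dev} structure is granted, but the paper's approach for the general case makes the existence of $F_0$ a direct consequence of complex dynamics rather than a cited topological black box.
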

\begin{proof}
Let $R(z)=z^n+\frac{\lambda}{z^m}$ be an $MS$-map. By \cite{dl}, $\beta_\lambda$ and $\tau_\lambda$ are Jordan curves, and moreover
\begin{equation*}
\beta_\lambda\cap\tau_\lambda=\{c_i:~0\leq i\leq N-1\}.
\end{equation*}
Let $CV(R):=\{v_i=R(c_i):~0\leq i\leq N-1\}$ be the critical values of $R$. Then $CV(R)\subset\beta_\lambda\setminus \bigcup_{i=0}^{N-1}\{c_i\}$. We would like to mention that $v_i$ may be equal to $v_j$ if $i\neq j$. For example, if $n=m=2$, then $v_0=v_2$ and $v_1=v_3$. By Carath\'{e}odory's theorem (see \cite[p.\,20]{pom} or \cite[\S 17]{mil}), each critical value $v_i$ is the landing point of a unique external ray $\gamma_i$, where $0\leq i\leq N-1$. Then it is easy to see that
\begin{equation*}
D:=\mathbb{P}^1\setminus \bigcup_{i=0}^{N-1}\overline{\gamma}_i
\end{equation*}
consists of $N$ components $D_0$, $\cdots$, $D_{N-1}$, such that $I_i\subset D_i$. For $0\leq i\leq N-1$, let $F_i$ be the inverse branch of $R^{-1}$ such that $F_i(D)= D_i$.

Let $\widetilde{J}_i:=\overline{D}_i\setminus(B_\lambda\cup T_\lambda)$ where $B_\lambda$ is the immediate basin of infinity and $T_\lambda$ is the Fatou component containing the origin. Let $\widetilde{J}$ be the complement of $B_\lambda$. Then
\begin{equation*}
F_i\widetilde{J}=\widetilde{J}_i:~0\leq i\leq N-1.
\end{equation*}
 We denote by $\mathcal{J}_i:=F_i\mathcal{J}$. Then immediately (36) holds since $\mathcal{J}=\bigcup_{i=0}^{N-1}\mathcal{J}_i$.
 
 Given $z\in\mathcal{J}_0$, one has $\tilde{z}:=\omega^i_Nz\in\mathcal{J}_i$. By Proposition 5, 
 $$R(\tilde{z})=R(\omega^i_Nz)=\omega_N^{in}R(z)=\omega_N^{in}R(\omega_N^{N-i}\tilde{z}).$$
 Taking inverse of $R$, one then gets $F_i(z)=\omega^i_NF_0(\omega_N^{N-in}z)$. 
\end{proof}

The graph approximation of $\J$ will be defined as follows. Let $V_0$ be the post critical set, which by definition is a finite subset of $\beta_\lambda$, and denote $V_{m+1}=R^{-1}V_m$ inductively. It is easy to see that $V_{m} \subset V_{m+1}$. 
The graphs are  $\Gamma_0=\beta_\lambda$ and $\Gamma_{m+1}=\Gamma_{m}\cup  \tau_\lambda^{m}$ which are divided into several edges by the vertices $V_{m+1}$. 

\begin{figure}[H]
\begin{center}
\includegraphics[scale=0.1]{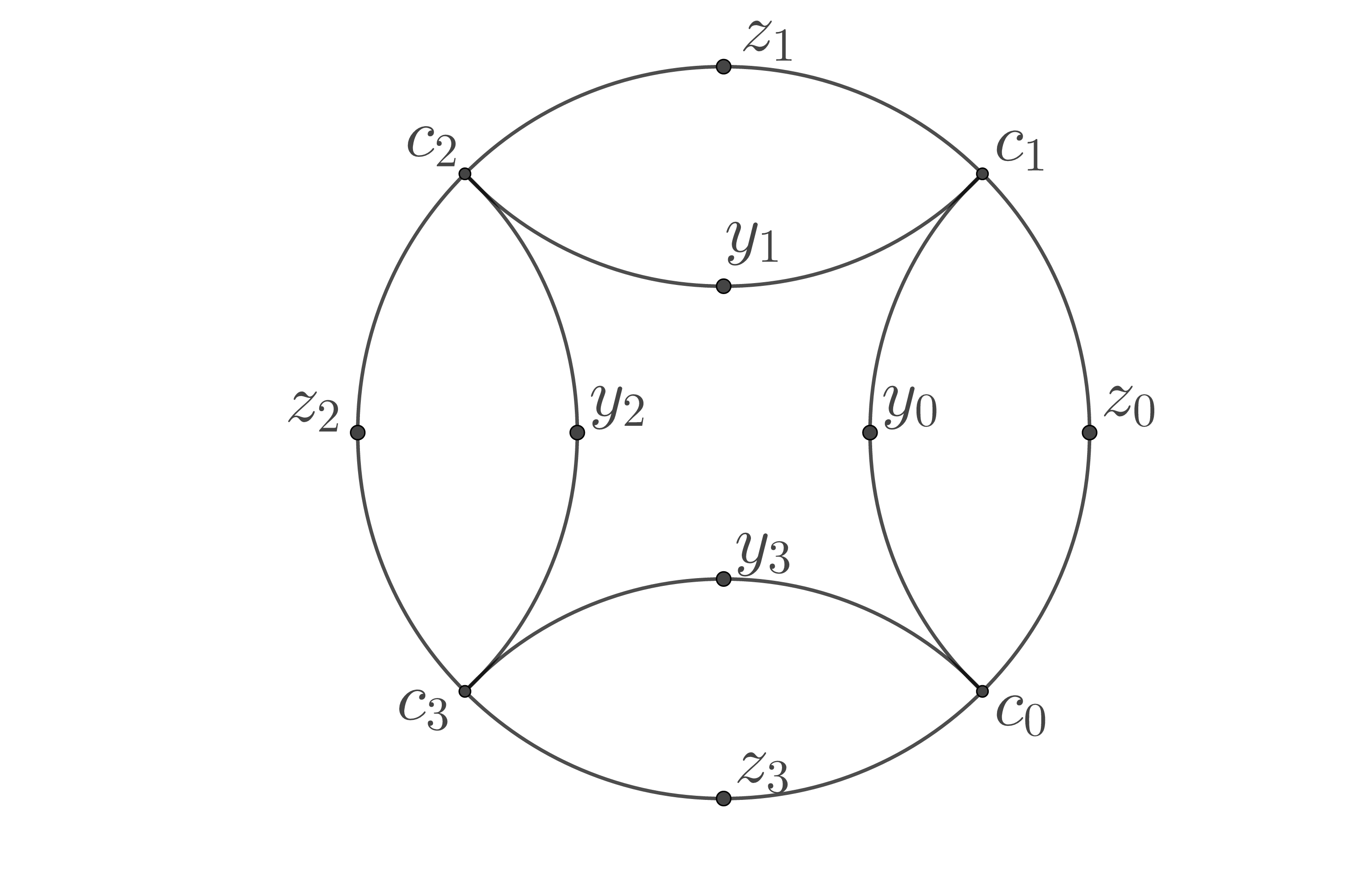}
\end{center}
\caption{The $\Gamma_1$ of the Julia set of $z^2+\frac{\lambda}{z^2}$ with $\lambda \approx-0.36428$}
\label{degree4_V_1}
\end{figure}

Note that $V_1\setminus V_0$ does not need to consist entirely of the critical points. For example, in the degree $4$ example as shown in Figure \ref{ms4}, there are additionally four other points lying in the middle of the four edges forming $\tau_\lambda$. The graph of $\Gamma_1$ can be seen in Figure \ref{degree4_V_1}. 

With the IFSs and the graph approximations for the generalized gasket-like Julia sets, it is then possible to construct the dynamically invariant energy forms and Laplacians in an equivalently self-similar way, as illustrated in the previous SG case. We leave the further discussion to interested readers.

\section*{Acknowledgments}
The authors wish to thank Shiping Cao and Fei Yang for helpful discussions and comments.

\end{document}